\def\R{{\mathbb {R}}}
\def\N{{\mathbb {N}}}
\def\p{{\mathbf {p}}}
\def\div{\operatorname {\text{div}}}
\def\dist{\operatorname {\text{dist}}}
\newtheorem{teo}{Theorem}[section]
\newtheorem{lema}[teo]{Lemma}
\newtheorem{prop}[teo]{Proposition}
\theoremstyle{remark}
\newtheorem{remark}[teo]{Remark}
\theoremstyle{definition}
\newtheorem{defi}[teo]{Definition}
\numberwithin{equation}{section}
\title[A priori estimates]{A priori estimates for solutions of $g$-Laplace type problems}
\author[I. Ceresa-Dussel, J. Fern\'andez Bonder and A. Silva]
{Ignacio Ceresa-Dussel, Juli\'an Fern\'andez Bonder and Anal\'{\i}a Silva}
\address{Juli\'an Fern\'andez Bonder \hfill\break\indent
Departamento  de Matem\'atica, FCEyN, Universidad de Buenos Aires, \hfill\break\indent 
Instituto de C\'alculo, CONICET\hfill\break\indent
Ciudad Universitaria, 0+$\infty$ building, C1428EGA, Av. Cantilo s/n\hfill\break\indent
Buenos Aires, Argentina}
\email{{\tt jfbonder@dm.uba.ar}\hfill\break\indent {\it Web page:} {\tt http://mate.dm.uba.ar/$\sim$jfbonder}}
\address{Anal\'{\i}a Silva \hfill\break\indent
Departamento de Matem\'atica, FCFMyN, Universidad Nacional de San
Luis \hfill\break\indent Instituto DE Matem\'atica Aplicada San
Luis, IMASL, CONICET. \hfill\break\indent Italia avenue 1556, San Luis (5700), 
\hfill\break\indent
San Luis, Argentina.}
\email{{\tt acsilva@unsl.edu.ar}\hfill\break\indent {\it Web page:} {\tt https://analiasilva.weebly.com/}}
\address{Ignacio Ceresa Dussel\hfill\break\indent
Departamento  de Matem\'atica, FCEyN, Universidad de Buenos Aires, \hfill\break\indent 
Instituto de C\'alculo, CONICET\hfill\break\indent
Ciudad Universitaria, 0+$\infty$ building, C1428EGA, Av. Cantilo s/n\hfill\break\indent
Buenos Aires, Argentina}
\email{{\tt iceresad@dm.uba.ar}}
\thanks{Supported by Universidad de Buenos Aires under grant X078, by ANPCyT PICT No. 2006-290 and CONICET (Argentina) PIP 5478/1438. J. Fern\'andez Bonder is a member of CONICET. Analia Silva is a fellow of CONICET}
\subjclass[2000]{35J20; 35J60}
\keywords{g-laplacian,a priori estimates}
\begin{document}

\begin{abstract}
In this work we study a priori bounds for weak solution to elliptic problems with nonstandard growth that involves the so-called $g-$Laplace operator.  The $g-$Laplacian is a generalization of the $p-$Laplace operator that takes into account different behaviors than pure powers. The method to obtain this a priori estimates is the so called ``blow-up'' argument developed by Gidas and Spruck. Then we applied this a priori bounds to show some existence results for these problems.
\end{abstract}

\maketitle

\section{Introduction}
In this work we consider the problem 
\begin{equation}\label{P}
\begin{cases}
\Delta_g u + B(x,u, \nabla u) = 0 & \text{in }\Omega,\\
u>0 & \text{in }\Omega,\\
u=0 & \text{on }\partial\Omega,
\end{cases}
\end{equation}
where $\Omega$ is a smooth domain of $\R^n$ ($n\geq2$), $B$ is the non linear source term and $\Delta_g$ is the $g$-Laplace operator, that is defined as
$$
\Delta_g u := \text{div}\left(g(|\nabla u|) \frac{\nabla u}{|\nabla u|}\right).
$$ 
Recall that when the nonlinearity $g(t)=t^{p-1}$ this operator becomes the well-known $p-$Laplace operator that has been widely studied in the literature.
The use of more general nonlinearities $g(t)$ comes from the fact that in several applications it is needed to consider growth laws different from pure powers or different behaviors near zero and near infinity.

For problems of the form \eqref{P} with general nonlinearities $g(t)$, the use of Orlicz and Orlicz-Sobolev spaces provide a natural framework for analysis. These spaces have been the subject of extensive research since the 1950s and are by now well understood. For a general introduction to Orlicz  and Orlicz-Sobolev spaces, interested readers may refer to the book of M. A. Krasnoselskii \cite{Krasnoselskii}.  

Equations like \eqref{P}  have been applied in various fields of sciences such as physics \cite{P1,P2}, ecology \cite{E1,E2}, image processing \cite{IP1}, and fluid dynamics \cite{FD1}. The study of the regularity for bounded solutions to \eqref{P} was carried on in the seminal paper of G. Lieberman \cite{Lieberman}. The results of \cite{Lieberman} will be of crucial use in this work.

We are interested in two aspects of problem \eqref{P}. Firstly, we consider the existence of a priori bounds for all weak solutions of \eqref{P}. This subject was considered in the seminal paper of Gidas and Spruck \cite{Gidas-Spruck}, where they  derive a priori bounds for positive solutions of the semi-linear elliptic boundary value problem
$$
\begin{cases}
\frac{\partial}{\partial x_j}(a^{ij}(x)\frac{\partial u}{\partial x_j})+b_j(x)u_{x_j}(x)+f(x,u)=0&\text{ in } \Omega,\\
u(x)=0 &\text{ on } \partial\Omega.
\end{cases}
$$

 In \cite{Gidas-Spruck}, the authors used a ``blow up" argument which reduces the problem to a global nonexistence results of Liouville type. The technique developed in \cite{Gidas-Spruck} was then replicated by several authors to cover more general equations and systems. See for instance \cite{DRuiz, Zou, Clement-Mansevich, Hu}.

In \cite{DRuiz}, D. Ruiz studied the existence of positive solutions for a nonlinear Dirichlet problem involving the $p$-Laplacian, when the non-linearity depend on $x, u$ and $\nabla u$ . 
$$
\begin{cases}
	\Delta_p u+B(x,u,\nabla u)=0&\text{ in }\Omega,\\
	u=0 &\text{ on }\partial\Omega.
\end{cases}
$$
However, the author was limited to consider the case where $1<p\le 2$, since at that time, there were no available Liouville-type results for the generic $p-$Laplacian in the halfspace. This restriction was overcome by H. Zou in \cite{Zou}, who prove the necesary Liouville type non-existence theorem on the half-space $\R^n_+$, which is needed to use the standard
blow-up device. 

The first objective of this work is to provide with the generalization of H. Zou's results to problems of the form \eqref{P}. That is, to prove uniform a priori bounds for nonnegative weak solutions of \eqref{P}.

In order to obtain such results, we use the classical blow-up argument of Gidas-Spruck, and the key observation is that in the limit of the blow-up procedure, under mild hypotheses on the function $g(t)$, the same Liouville-type theorems for the $p-$Laplace operator used in \cite{Zou} are needed.

The work of H. Zou \cite{Zou} is the main source of inspiration for this paper.

Secondly, we aim to prove the existence of solutions to \eqref{P}. In general, when  $B$
depends on $\nabla u$, variational methods cannot be applied to deal with \eqref{P}. Hence, the apriori bounds estimates and a fixed point theorem  are crucial. 

So the second main result of this work is to show the existence of nontrivial, nonnegative solutions to \eqref{P} by means of a fixed point argument, where the a priori bounds for weak solutions are the key ingredient.

To end this introduction we want to point out that recently in \cite{Barletta-Tornatore, Barletta-Tornatore2, Barletta} some existence results for equations of the type \eqref{P} were proved by different arguments. In fact, the authors use the method of sub and supersolutions to show the existence of weak solutions to \eqref{P}.

\subsection*{Organization of the paper}
The paper is structured as follows: in Section \ref{Preliminares}, we give the framework of our work and recall some results on $g-$Laplace equations that will be used in the rest of the paper.
In Section \ref{Apriori}, we state and prove the existence of a priori bounds of solution of \eqref{P}. Finally in Section \ref{Existencia} we prove the existence results for \eqref{P}.

\section{Preliminaries}\label{Preliminares}
In this section we will recall definitions and preliminary results on Orlicz and Orlicz-Sobolev spaces and then we introduce the $g-$Laplace operator and recall its main properties and the regularity results for solutions of \eqref{P} needed in this work. Finally we will state some Liouville type theorems for the $p-$Laplace operator that will turn out to be crucial in our arguments.

\subsection{Young functions}
We define a Young function as an application $G\colon [0,\infty)\to [0,\infty)$, that is increasing, convex, $G(0)=0$ and of class $C^1$. Let us denote $g(t)=G'(t)$

The function $g(t)$ will be assume to verify that
$$
g(t) = \int_0^t g'(s)\, ds,
$$
where $g'$ is a nonnegative, right-continuous, locally integrable function.

In fact, we will further assume  that $g$ satisfies the so-called Lieberman conditions, i.e.
\begin{equation}\label{Lieberman.condition}
	p^--1\le \frac{g'(t)t}{g(t)}\le p^+-1,
\end{equation}
for some $1<p^-\le p^+<\infty$.
 
From this inequality it can be easily verified that
 \begin{equation}\label{Lieberman2.condition}
 	p^-\le \frac{tg(t)}{G(t)}\le p^+.
 \end{equation}

A Young function $G$ is said to verify the $\Delta_2-$condition if there exists a constant $C>0$ such that
\begin{equation}\label{Delta2.condition}
G(2t)\leq CG(t),\quad t\geq0.
\end{equation}

Condition \eqref{Lieberman2.condition} ensures that both $G$ and $\tilde G$ satisfy the $\Delta_2$ condition, where $\tilde G$ denotes the complementary function of $G$, that is defined as 
$$
\tilde G(t)  :=\sup\{t\omega-G(\omega):\omega>0\}.
$$
For a proof of all the assertions in this subsection and a thorough introduction to the subject, we refer to \cite{Krasnoselskii}.

Throughout this article $G$ will always denote a Young function that verifies \eqref{Lieberman.condition} and $G':=g$ will always be assume to have {\em regular variation at infinity}, i.e.
 \begin{equation}\label{reg.var}
 \lim_{s\to\infty} \frac{g(st)}{g(s)} = t^{p-1},
 \end{equation}
 uniformly on bounded intervales $t\in [0,T]$.
 
 Observe that one can easily check that $p^-\le p\le p^+$. This exponent $p$ can be thought as the {\em exponent at infinity} for the function $g$.

Some examples of Young functions $G$ satisfying \eqref{Lieberman.condition}--\eqref{reg.var} includes the most common uses of Young functions, for instance $G(t) = t^{p}\ln^\alpha(t + 1)$, $p>1$, $\alpha>0$ and $G(t)=\frac1p t^p + \frac1q t^q$, $1<p,q<\infty$.

\subsection{Orlicz spaces}
Given $G\colon \R_+ \to \R_+$ a Young function, we consider the spaces
$$
L^G(\Omega)=\left\{u\in L^1_\text{loc}(\Omega)\colon \Phi_G(u)<\infty\right\}
$$
and
$$
W^{1,G}(\Omega)=\left\{u\in L^G(\Omega)\colon \Phi_{1,G}(u)<\infty \right\},
$$
where
$$
\Phi_G(u) = \Phi_{G,\Omega}(u) =\int_{\Omega} G(|u|)\,dx\quad\text{ and }\quad\Phi_{1,G}(u)= \Phi_{1,G,\Omega}(u) = \int_{\Omega}G(|\nabla u|)\,dx.
$$
These spaces are endowed, respectively, with the Luxemburg norms defined as

$$
\|u\|_G=\|u\|_{G,\Omega} = \|u\|_{L^G(\Omega)}=\inf\left\{ \lambda>0 \colon \Phi_G\left(\frac{u}{\lambda}\right)\leq 1\right\}
$$
and 
$$
\|u\|_{1,G}=\|u\|_{1,G,\Omega} = \|u\|_{W^{1,G}(\Omega)}=\|u\|_G+ \inf\left\{ \lambda>0 \colon \Phi_{1,G}\left(\frac{u}{\lambda}\right)\leq 1\right\}.
$$
In the following proposition, we recall some properties of these spaces.
\begin{prop}\cite[Chapter 8]{Adams}
	Let $G$ be a Young function that satisfies the condition \eqref{Lieberman2.condition}. Then, the spaces $L^G(\Omega)$ and $W^{1,G}(\Omega)$ are reflexive, separable Banach spaces. Moreover, the dual space of $L^G(\Omega)$ can be identified with $L^{\widetilde{G}}(\Omega)$. Finally, $C^\infty_c(\R^n)$ is dense in both $L^G(\R^n)$ and $W^{1,G}(\R^n)$.
\end{prop}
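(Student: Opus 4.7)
The plan is to exploit the hypothesis \eqref{Lieberman2.condition}, which as noted in the paper forces both $G$ and its complementary $\tilde G$ to satisfy the $\Delta_2$-condition. Virtually every desirable property of $L^G$ and $W^{1,G}$ follows from this single fact, so the strategy is to first establish the structural theorems for $L^G$ and then transfer them to $W^{1,G}$ by embedding it isometrically into a product of Orlicz spaces.

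I would proceed in the following order. \emph{Step 1 (Banach structure).} The Luxemburg gauge is a norm by convexity of $G$, and completeness follows by a standard extraction: a Cauchy sequence is Cauchy in measure (by Chebyshev applied to $G$), so one passes to an a.e.\ convergent subsequence and closes the loop with Fatou applied to the modular. \emph{Step 2 (Hölder and duality).} The Young inequality $st\le G(s)+\tilde G(t)$ yields the Hölder-type estimate $\int_\Omega |fh|\,dx\le 2\|f\|_G\|h\|_{\tilde G}$, providing an isometric embedding $L^{\tilde G}\hookrightarrow (L^G)^*$. For surjectivity, the $\Delta_2$-condition on $G$ is used to prove that the modular $\Phi_G$ is finite on all scalar multiples of any element of $L^G$ (so $L^G$ coincides with its Morse subclass $E^G$); a Radon--Nikodym argument applied to the signed measure $A\mapsto \varphi(\chi_A)$ then represents any $\varphi\in (L^G)^*$ as integration against an $L^{\tilde G}$ function. \emph{Step 3 (Reflexivity).} Applying Step 2 twice, using that $\tilde G$ also lies in $\Delta_2$ and that $\tilde{\tilde G}=G$, one gets $(L^G)^{**}=L^G$ canonically.

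\emph{Step 4 (Separability).} Here the $\Delta_2$-condition is again decisive: it implies that modular and norm convergence are equivalent, and hence that bounded simple functions are dense in $L^G(\Omega)$. Approximating the values by rationals and the supports by a countable generating algebra of measurable sets (e.g.\ finite unions of dyadic cubes intersected with $\Omega$) yields a countable dense set. \emph{Step 5 (Density of $C_c^\infty$).} On $\R^n$, truncate $u\in L^G(\R^n)$ to $u\chi_{B_R}$ (convergent in $L^G$ by dominated convergence for the modular, which holds thanks to $\Delta_2$), then mollify with $\rho_\varepsilon$; the key estimate is $\Phi_G(u*\rho_\varepsilon - u)\to 0$, proved by writing the convolution as an average and using Jensen on $G$ together with continuity of translation in $L^G$ (itself a consequence of $\Delta_2$). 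The $W^{1,G}$ case is obtained by noting that $\nabla(u*\rho_\varepsilon)=(\nabla u)*\rho_\varepsilon$ and applying the same approximation coordinatewise.

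\emph{Step 6 (Transfer to $W^{1,G}$).} The map $u\mapsto (u,\partial_1 u,\dots,\partial_n u)$ is a linear isometry from $W^{1,G}(\Omega)$ into the product $L^G(\Omega)^{n+1}$, and its image is closed (distributional gradients pass to weak-$L^G$ limits). Closed subspaces of reflexive separable Banach spaces are reflexive and separable, so $W^{1,G}(\Omega)$ inherits both properties from Steps 1--4. The main obstacle in the whole argument is Step 2: showing that $(L^G)^*=L^{\tilde G}$ rather than the larger Köthe dual, which is precisely where the $\Delta_2$ hypothesis on $G$ (i.e.\ the upper bound in \eqref{Lieberman2.condition}) is indispensable. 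Since the result is standard and available in \cite{Adams}, in the paper itself I would simply cite it, as the authors do.
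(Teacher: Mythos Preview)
Your sketch is a correct and standard outline of how these properties are established in the Orlicz setting, and you correctly identify that the $\Delta_2$-condition on both $G$ and $\tilde G$ (forced by \eqref{Lieberman2.condition}) is the engine behind every step. However, the paper itself gives no proof at all: the proposition is stated with a bare citation to \cite[Chapter~8]{Adams} and nothing further. You anticipate this in your final sentence, so there is no real discrepancy---you have simply written out what the cited reference contains, whereas the authors chose to invoke it directly.
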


\subsection{The $g-$Laplace operator}
Given $\Omega \subset \R^n$ an open set, we define 
$$
		W^{1,G}_0(\Omega) = \overline{C^\infty_c(\Omega)},
$$
where the closure is taken with respect to the norm $\|\cdot\|_{1,G}$.
Therefore, the topological dual space of $W^{1,G}_0(\Omega)$ is contained in the space of distributions $\mathcal D'(\Omega)$ and it will be denoted by $W^{-1,\tilde G}(\Omega) $.

Then, we present the $g-$Laplace operator $-\Delta_g\colon W^{1,G}_0(\Omega)\to W^{-1,\tilde G}(\Omega)$ as
$$
\langle-\Delta_g u,v\rangle=\int_\Omega g(|\nabla u|)\frac{\nabla u}{|\nabla u|}\nabla v\,dx.
$$
Where $\langle \cdot , \cdot \rangle$  denotes the duality pairing between $W^{1,G}_0(\Omega)$ and $ W^{-1,\tilde G}(\Omega)$.
We need the definition of a weak solution to \eqref{P}.
\begin{defi}
	A function $u\in W^{1,G}_0(\Omega)\cap L^\infty(\Omega)$ is said to be a weak solution of \eqref{P} if
	$$
	\int_\Omega g(|\nabla u|)\frac{\nabla u}{|\nabla u|}\nabla v\,dx=\int_\Omega B(x,u,\nabla u)v\,dx
	$$
	for all $v\in W^{1,G}_0(\Omega)\cap L^\infty(\Omega)$.
\end{defi}

The regularity theory for weak solutions of \eqref{P} was establishedby G. Lieberman in \cite{Lieberman}. In fact, the author in \cite{Lieberman} analyzed slightly more general problems than \eqref{P} and when we specialized  \cite[Theorem 1.7]{Lieberman} to \eqref{P} we obtain the following regularity theorem.

\begin{teo}\label{Adap.Lieberman}
	Let $\Omega$ be a bounded domain in $\R^n$ with $C^{1,\alpha}$ boundary for some $0<\alpha\leq1$. Suppose that $G$ is a Young function satisfying \eqref{Lieberman.condition}, and consider the problem \eqref{P}. Assume that $B$ satisfies
	$$
	|B(x,t,\p)|\leq K(1+g(\p)\p)
	$$	
	for some positive constants $K$ and $M_0$, all $x\in\Omega$, all $t\in[-M_0,M_0]$, and all $\p\in\R^n$. Then, any weak solution $u\in W^{1,G}_0(\Omega)$ with $|u|\leq M_0$ in $\Omega$ is $C^{1,\beta}(\overline{\Omega})$ for some positive $\beta$.
	
\end{teo}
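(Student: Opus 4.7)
The statement is a direct specialization of Lieberman's general regularity theorem \cite[Theorem 1.7]{Lieberman}, so my plan is simply to identify our equation with Lieberman's framework and verify that the required structural conditions on the principal part and on the lower-order term are satisfied under our hypotheses. No new ideas are needed; the proof reduces to a careful bookkeeping exercise.

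First I would write \eqref{P} in divergence form as $\div A(x,u,\nabla u) + B(x,u,\nabla u) = 0$ with principal part
$$
A(x,t,\p) = g(|\p|)\,\frac{\p}{|\p|}.
$$
Lieberman requires ellipticity and growth of $A$ to be controlled by a single structural function---exactly our $g$---together with H\"older dependence in $(x,t)$. A direct computation yields
$$
D_\p A(x,t,\p)\,\xi\cdot\xi = \frac{g(|\p|)}{|\p|}\left(|\xi|^2 - \frac{(\p\cdot\xi)^2}{|\p|^2}\right) + g'(|\p|)\,\frac{(\p\cdot\xi)^2}{|\p|^2},
$$
so that \eqref{Lieberman.condition} gives the two-sided bound
$$
\min\{1,p^- - 1\}\,\frac{g(|\p|)}{|\p|}|\xi|^2 \;\le\; D_\p A\,\xi\cdot\xi \;\le\; \max\{1,p^+ - 1\}\,\frac{g(|\p|)}{|\p|}|\xi|^2,
$$
which is exactly Lieberman's structural assumption (the lower bound is strictly positive thanks to $p^- > 1$). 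Since $A$ has no explicit $(x,t)$ dependence, the required H\"older continuity in those variables is trivial.

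Next, for the lower-order term, Lieberman admits precisely the natural growth $|B(x,t,\p)| \le K(1 + g(|\p|)|\p|)$ on the band $\{|t|\le M_0\}$, which is our hypothesis on $B$. Because $u$ is a bounded weak solution with $\|u\|_\infty \le M_0$, only the values of $B$ on this band are ever used, so the stated condition suffices. The $C^{1,\alpha}$ boundary hypothesis likewise matches Lieberman's boundary regularity requirement verbatim, and one may then quote his theorem directly to obtain some $\beta\in(0,\alpha]$ and the global bound $u\in C^{1,\beta}(\overline\Omega)$.

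The main obstacle I foresee is not analytic but organizational: aligning our notation---in particular the roles of $p^-$, $p^+$ and of the structural function $g$---with the one used in \cite{Lieberman}, so that his constants can be read off unambiguously and the exponent $\beta$ is seen to depend only on $p^\pm$, $K$, $M_0$, $\alpha$ and $\Omega$. Once this identification is carried out, the conclusion is immediate.
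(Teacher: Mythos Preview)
Your proposal is correct and matches the paper's treatment: the paper does not give an independent proof of this theorem but simply states it as the specialization of \cite[Theorem 1.7]{Lieberman} to problem \eqref{P}. Your verification that the principal part $A(\p)=g(|\p|)\p/|\p|$ satisfies Lieberman's structural ellipticity via \eqref{Lieberman.condition}, and that the natural-growth hypothesis on $B$ is exactly what Lieberman requires, is precisely the bookkeeping the paper implicitly assumes the reader will carry out.
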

\subsection{Liouville non-existence type theorem}
Finally, we state a Liouville non-existence type theorem for the $p-$Laplacian on the half-space $\R^n_+$ and on the entire space $\R^n$, when $B$ depends only on $u$. This result is a particular case of \cite[Theorem 1.1]{Zou}.

\begin{teo}\label{Liouville} Given $1\leq p<n$ and $p^*=\frac{np}{n-p}$.
	Assume that $B(x,t,\p)=B(t)$ is continuously differentiable for $t>0$ and that there exist positive constants $K>0$, $q\in (p, p^*)$ and $r\in(0,p^*-1)$ such that for any $t>0$
	$$
	K^{-1}t^{q-1}\leq B(t)\leq Kt^{q-1},\quad rB(t)\geq tB'(t).
	$$ 
	Then 
\begin{equation}\label{eq.liouville}
	\begin{cases}
		\Delta_p u+B(u)=0 &\text{ in } \Omega \\
		u > 0 &\text{ in } \Omega\\
		u=0 &\text{ on } \partial \Omega
	\end{cases}
\end{equation}
	 does not admit  any non-negative non-trivial solution $u$ in the half space $\Omega=\R^n_+$ or in the entire space $\Omega = \R^n$.
\end{teo}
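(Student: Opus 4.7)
The plan is to argue along H.~Zou's lines~\cite{Zou}, from which this statement is a direct specialization. The argument splits into the cases $\Omega=\R^n$ and $\Omega=\R^n_+$; the entire-space case is treated first and then fed into the half-space case.

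For $\Omega=\R^n$, I would proceed via the Pohozaev/energy identity method. The growth bound $K^{-1}t^{q-1}\le B(t)\le Kt^{q-1}$ together with the subcritical exponent $q<p^*$ is first used to establish pointwise decay of $u$ and $|\nabla u|$ at infinity: testing the equation against $\eta^\alpha u^s$ for cut-off functions $\eta$ and suitable $s$, a Moser-type iteration combined with interior $C^{1,\beta}$ regularity for the $p$-Laplace equation yields that the boundary terms on $\partial B_R$ vanish as $R\to\infty$. Testing the equation by $u$ and by $x\cdot \nabla u$ and integrating by parts then produces, respectively, the energy and Pohozaev identities
\begin{equation*}
\int_{\R^n}|\nabla u|^p\,dx=\int_{\R^n} uB(u)\,dx,\qquad \frac{n-p}{p}\int_{\R^n}|\nabla u|^p\,dx=n\int_{\R^n} F(u)\,dx,
\end{equation*}
where $F(t)=\int_0^t B(s)\,ds$. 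Eliminating $\int|\nabla u|^p$ gives $\int_{\R^n}\bigl[uB(u)-p^*F(u)\bigr]\,dx=0$. On the other hand, the hypothesis $tB'(t)\le rB(t)$, after an integration by parts in the definition of $F$, yields $(r+1)F(t)\ge tB(t)$, whence $uB(u)-p^*F(u)\le (r+1-p^*)F(u)$, which is strictly negative wherever $u>0$ since $r<p^*-1$ and $F(u)>0$ there (by the lower bound $B(t)\ge K^{-1}t^{q-1}$). This forces $u\equiv 0$, contradicting positivity.

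For $\Omega=\R^n_+$, I would adapt the moving planes method to the $p$-Laplacian in the spirit of Damascelli--Pacella: relying on $C^{1,\beta}$ regularity and weak comparison principles in narrow domains, any non-negative non-trivial solution must be strictly monotone increasing in $x_n$. A translation-and-limit procedure $u_k(x',x_n)=u(x',x_n+k)$ then produces, as $k\to\infty$, a non-trivial non-negative solution on $\R^n$ depending only on $x'$, i.e.\ effectively a solution on $\R^{n-1}$ of the same type; the contradiction follows by induction on $n$ together with the $\R^n$ Liouville statement just proved.

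The main obstacle is the moving-planes step on $\R^n_+$: the degeneracy of the $p$-Laplacian blocks a direct application of the classical strong maximum principle and Hopf lemma on the critical set $\{\nabla u=0\}$, so one must rely on weak comparison in narrow strips and a careful treatment of that set. The decay estimates underpinning the Pohozaev identity on $\R^n$ are technical but by now standard once subcriticality and $C^{1,\beta}$ regularity are in hand.
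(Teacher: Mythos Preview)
The paper does not prove this theorem at all: it is stated in the preliminaries as ``a particular case of \cite[Theorem 1.1]{Zou}'' and is used as a black box in the blow-up argument of Theorem~\ref{teo.apriori}. So there is no proof in the paper to compare your attempt against.

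That said, a brief comment on your sketch. The Pohozaev route on $\R^n$ is formally clean, and your algebra $(r+1)F(t)\ge tB(t)$ is correct, but the step you flag as ``technical but by now standard'' is in fact the crux: without any a priori integrability or decay assumption on $u$ (none is made in the statement), the Moser iteration has nothing to start from, and the boundary terms in the Pohozaev identity need not vanish. Zou's actual argument in \cite{Zou} does not go through Pohozaev; it relies instead on integral estimates of Gidas--Spruck type adapted to the $p$-Laplacian. For the half-space, your translation-and-limit step is suspect as written: if $u$ is strictly increasing in $x_n$ on $\R^n_+$, the translates $u(x',x_n+k)$ may well diverge to $+\infty$, so you do not automatically obtain a finite limit solution on $\R^n$, let alone one depending only on $x'$. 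The half-space Liouville theorem for the $p$-Laplacian is precisely the new contribution of \cite{Zou}, and its proof requires more than a reduction to the entire-space case.
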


\section{A priori estimates}\label{Apriori}
In this section we prove the existence of a priori $L^\infty(\Omega)$ bound for solutions of the quasi-linear elliptic differential equation 
\begin{equation}
\begin{cases}
\Delta_g u + B(x,u, \nabla u) = 0 & \text{in }\Omega\\
u>0 & \text{in }\Omega\\
u=0 & \text{on }\partial\Omega,
\end{cases}
\end{equation}
where $\Omega\subset \R^n$ ($n\ge2$) is a bounded smooth domain.

The nonlinear source term $B(x,t,\p)$ verifies the \textbf{growth condition}

	\begin{equation}\label{cota.B}
		|B(x,t,\p)|\le K(1 + f(t) + h(|\p|)),
		\end{equation}
for some constant $K>0$,	and some functions $f,h\colon \R_+\to \R_+$.

The function $f$ is assume to verify that $t f(t)\gg G(t)$ in the sense that for any $C>0$, there exists $t_0>0$ such that
\begin{equation}\label{f<G}
	t f(t)\ge G(Ct),
\end{equation}
for every $t>t_0.$
	
	On $h$ we assume that given $s_0>0$, there exists $C>0$ such that
	\begin{equation}\label{eti1}
	\frac{h(G^{-1}(s f(s)) t)}{f(s)} \le C(1+G(t)),
	\end{equation}
	for every $s>s_0$.
	
%	We further assume that
%\begin{equation}\label{h.o}
%	\frac{h(G^{-1}(t))}{t} = o\left(\frac{1}{F^{-1}(t)}\right)\qquad \text{as } t\to\infty.
%\end{equation}
%	Observe that if $r<p^-(1-1/q^-)$, then $h(t)=t^r$ satisfies both hypotheses.
%	
	
Furthermore, we assume that $B$ satisfies the following \textbf{limit condition}: there exists $q>1$ and a continuous function $b:\overline{\Omega}\to \R$ such that for every $(M_k,a_k)$ with $a_k = O(G^{-1}(M_k f(M_k)))$, we have
	\begin{equation}\label{lim.B}
		\lim_{k\to\infty}\frac{B(x,M_k t,a_k \p)}{f(M_k)} = b(x)t^{q-1},
	\end{equation}
	uniformly in $\Omega$ .

Observe that if
$$
B(x,t,\p) = Af(t) + Bf_0(t) + C h(|\p|),
$$
where $f$ is of regular variation at infinity, $f_0\ll f$, and $h$ verifies \eqref{eti1},
then $B$ satisfy ours growth and limit conditions.

Now, we are able to prove the a priori estimates for \eqref{P}. 
\begin{teo}\label{teo.apriori}
Let $\Omega\subset \R^n$ be a bounded domain with $C^{1,\beta_0}$ boundary. Let $u\in W^{1, G}_0(\Omega)\cap C(\overline{\Omega})$ be a weak solution to
\begin{equation}\label{P.lambda}
\begin{cases}
\Delta_g u + B(x,u, \nabla u) + \lambda = 0 & \text{in }\Omega\\
u>0 & \text{in }\Omega\\
u=0 & \text{on }\partial\Omega,
\end{cases}
\end{equation}
where $G$ is a Young functions satisfying \eqref{Lieberman.condition} and \eqref{reg.var}. Assume moreover that $B$ satisfies the limit condition \eqref{lim.B} and the growth condition \eqref{cota.B}, with the assumptions that the function $f$ satisfies \eqref{f<G}, and $h$ satisfies \eqref{eti1}.  Moreover, assume that $p$, as determined by \eqref{reg.var}, and $q$, as determined by \eqref{lim.B}, are exponents such that $q\in (p,p^*)$.

Then, there exists a constant $C>0$ such that
$$
\|u\|_\infty + \lambda \le C.
$$
where the value of $C$ depends on $\Omega$, $K$, $p^+$, $p^-$. The constant $K$ is determined by equation \eqref{cota.B}, and the constants $p^+$, $p^-$ were determined in \eqref{Lieberman.condition}.
\end{teo}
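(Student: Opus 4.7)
The approach is the classical blow-up argument of Gidas--Spruck, adapted to the Orlicz setting. Argue by contradiction: assume there is a sequence of solutions $(u_k,\lambda_k)$ to \eqref{P.lambda} with $M_k+\lambda_k\to\infty$, where $M_k:=\|u_k\|_\infty$. A preliminary observation reduces matters to the case $M_k\to\infty$: if $M_k$ stayed bounded, testing the equation at an interior maximum point (where $\nabla u_k=0$) and using the growth bound \eqref{cota.B} together with $\Delta_g u_k\le 0$ in the appropriate weak sense forces $\lambda_k\le K(1+f(M_k))+\text{const}$, so $\lambda_k$ would also remain bounded, contradiction. Thus $M_k\to\infty$; pick $x_k\in\Omega$ with $u_k(x_k)=M_k$ and, along a subsequence, $x_k\to x_0\in\overline\Omega$.

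The key choice is the scaling parameter
\[
 a_k:=\frac{M_k}{G^{-1}(M_k f(M_k))},\qquad v_k(y):=\frac{u_k(x_k+a_k y)}{M_k},
\]
defined on $\Omega_k:=a_k^{-1}(\Omega-x_k)$. Hypothesis \eqref{f<G} forces $a_k\to 0$, while \eqref{Lieberman2.condition} yields $g(M_k/a_k)\cdot(M_k/a_k)\asymp G(M_k/a_k)=G(G^{-1}(M_kf(M_k)))=M_kf(M_k)$, so the factor $c_k:=g(M_k/a_k)/(a_k f(M_k))$ is bounded above and below. Computing the equation satisfied by $v_k$, dividing by $c_k f(M_k)$, and factoring out $g(M_k/a_k)$ from the principal part, I would arrive at
\[
 \operatorname{div}_y\!\Bigl(g_k(|\nabla v_k|)\tfrac{\nabla v_k}{|\nabla v_k|}\Bigr)+\tfrac{1}{c_k}\tfrac{B(x_k+a_k y,M_k v_k,(M_k/a_k)\nabla v_k)}{f(M_k)}+\tfrac{\lambda_k}{c_k f(M_k)}=0,
\]
with $g_k(t):=g((M_k/a_k)t)/g(M_k/a_k)$. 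By the regular variation hypothesis \eqref{reg.var}, $g_k(t)\to t^{p-1}$ uniformly on bounded intervals, while $(M_k/a_k)=G^{-1}(M_kf(M_k))$ fits exactly the scaling allowed in \eqref{lim.B}, so the nonlinear term converges locally uniformly to $b(x_0)v^{q-1}/c_\infty$. The hypothesis \eqref{eti1} is precisely what is needed to control the $h(|\nabla u_k|)$ contribution: after rescaling it becomes $h(G^{-1}(M_k f(M_k))|\nabla v_k|)/f(M_k)\le C(1+G(|\nabla v_k|))$, which remains bounded once one has a $C^1$ bound on $v_k$. Finally, the interior maximum principle argument above also gives $\lambda_k/f(M_k)$ bounded, so it converges (along a subsequence) to some $\mu\ge 0$.

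The next step is uniform regularity: once the rescaled nonlinearity is bounded on compact subsets of $\Omega_k$ in terms of a bound on $\|\nabla v_k\|_\infty$, Theorem \ref{Adap.Lieberman} applied to the rescaled problem (whose structural constants $p^\pm$ are preserved by the scaling, since the Lieberman condition \eqref{Lieberman.condition} is scale invariant) yields uniform $C^{1,\beta}$ estimates on compact subsets of $\Omega_k$. Passing to a subsequence, $v_k\to v$ in $C^1_{\mathrm{loc}}$; the limit domain is $\R^n$ if $d_k:=\operatorname{dist}(x_k,\partial\Omega)/a_k\to\infty$, and $\R^n_+$ (after a rotation straightening $\partial\Omega$ near $x_0$) if $d_k$ stays bounded, in which case $v=0$ on $\partial\R^n_+$ by the uniform convergence up to the boundary. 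The limit $v$ is nonnegative and satisfies $v(0)=1$, so $v\not\equiv 0$, and it solves $\Delta_p v+\tilde b\, v^{q-1}+\mu=0$ with $\tilde b>0$, $q\in(p,p^*)$. Applying Theorem \ref{Liouville} to the equation $\Delta_p v+(\tilde b v^{q-1}+\mu)=0$ (with $B(t)=\tilde b\,t^{q-1}+\mu$ satisfying the required structural inequalities, at worst after a harmless shift absorbing the constant $\mu$) yields $v\equiv 0$, contradicting $v(0)=1$.

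The main technical obstacle is the passage to the limit: one must verify that the factor $c_k$ really is bounded away from zero and infinity (this uses both sides of \eqref{Lieberman2.condition} together with \eqref{reg.var}), check that Lieberman's regularity estimates apply uniformly to the rescaled family with constants independent of $k$, and handle the additive constant $\mu=\lim\lambda_k/f(M_k)$ in a way compatible with the hypotheses of Theorem \ref{Liouville}. The rest of the argument --- the dichotomy half-space vs.\ entire space, the $C^1$ convergence, and the extraction of the Liouville contradiction --- is standard once the correct scaling has been identified.
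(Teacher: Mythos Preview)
Your blow-up scaling is exactly the paper's: your $a_k=M_k/G^{-1}(M_kf(M_k))$ is $1/\varphi(M_k)$, which the paper shows is comparable to $1/\phi(M_k)$ where $\phi$ is defined implicitly by $\phi(t)g(t\phi(t))=f(t)$ (the implicit definition makes the normalizing constant exactly $1$ rather than your bounded $c_k$, a cosmetic difference). The passage to the limit via \eqref{reg.var}, \eqref{lim.B}, \eqref{eti1} and Lieberman's $C^{1,\beta}$ estimates, and the interior/half-space dichotomy, are identical to the paper's.

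The one substantive difference is how you handle $\lambda$, and there your argument has a gap. You want to reduce to $M_k\to\infty$ and then bound $\lambda_k/f(M_k)$ by ``testing the equation at an interior maximum point'' and invoking ``$\Delta_g u_k\le 0$ in the appropriate weak sense''. But Lieberman's regularity only gives $u_k\in C^{1,\beta}$; the operator $\Delta_g u_k$ is not defined pointwise (and is degenerate at the maximum, where $\nabla u_k=0$), so the inequality $\Delta_g u_k(x_k)\le 0$ has no obvious meaning localized at a single point. One can sometimes rescue such claims via viscosity arguments or a carefully chosen test function, but this is real work you have not done, and your parenthetical ``harmless shift absorbing the constant $\mu$'' in the Liouville step is equally unjustified (the equation is nonlinear in $v$).

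The paper sidesteps this entirely by a two-case split on whether $f(M_k)/\lambda_k$ is bounded. In Case~1 ($f(M_k)/\lambda_k\to\infty$, hence $M_k\to\infty$) one scales with $N_k=M_k$, $y_k=x_k$; then $\mu_k\to 0$, the limit equation is $\Delta_p v+b(x_0)v^{q-1}=0$ with no extra constant, and $v(0)=1$ yields the contradiction via Theorem~\ref{Liouville}. In Case~2 ($f(M_k)/\lambda_k$ bounded, hence $\lambda_k\to\infty$) one scales with $N_k=f^{-1}(\lambda_k)$ and centers at a fixed interior point $y_k=0$; then $\mu_k\equiv 1$, $M_k/N_k$ bounded gives $\|v_k\|_\infty\le C$, and $\Omega_k\to\R^n$ automatically. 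The limit equation is $\Delta_p v+b(x_0)v^{q-1}+1=0$, and the contradiction comes from nonexistence of \emph{any} nonnegative entire solution (note $v\equiv 0$ is ruled out by the constant $1$). This dichotomy covers in particular the scenario $M_k$ bounded, $\lambda_k\to\infty$, without any pointwise maximum-principle reasoning.
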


\begin{proof}
Assume that there exists a sequence $\{(u_k, \lambda_k)\}_{k\in\N}$ of weak solutions to \eqref{P.lambda} such that
$$
\|u_k\|_\infty + \lambda_k\to\infty. 
$$
Let us defined
$$
M_k := \sup_{x\in\Omega} u_k(x) = u_k(x_k),
$$
with $x_k\in \overline{\Omega}$ and let $\phi\colon\R_+\to\R_+$ be the function given implicitly by
$$
\phi(t)g(t\phi(t))=f(t).
$$
It is easy to see that $\phi$ is a well-defined, continuous and nondecreasing function such that $\phi(0)=0$ and $\phi(\infty)=\infty$.

Furthermore, given $\varphi$ as
\begin{equation}\label{phitilde}
\varphi(t) := \frac{G^{-1}(tf(t))}{t},
\end{equation}
 $\phi$ and $\varphi$ verifies 
\begin{equation}\label{comparison.phi.phitilde}
k_-\varphi(t)\leq \phi(t)\leq k_+\varphi(t),
\end{equation}
where the constants $k_\pm$ depend only on $p_\pm$.
Observe that since $tf(t)\gg G(t)$, we have that $\varphi(t)\to\infty$ as $t\to \infty$ and hence $\phi(t)\to\infty$ as $t\to\infty$.

Next, let $N_k>0$ and $y_k\in \overline{\Omega}$ be given and define the rescaled functions
$$
v_k(x) = \frac{1}{N_k} u_k\left(y_k + \frac{x}{\phi(N_k)}\right),
$$
and the rescaled domains
$$
\Omega_k := \left\{x\in \R^n\colon y_k + \frac{x}{\phi(N_k)}\in \Omega\right\}.
$$
Now, direct computations gives that $v_k$ satisfies in $\Omega_k$,
$$
\phi(N_k)\div\left(g\left(N_k\phi(N_k)|\nabla v_k|\right)\frac{\nabla v_k}{|\nabla v_k|}\right) + B\left(y_k + \frac{x}{\phi(N_k)}, N_k v_k, N_k\phi(N_k)\nabla v_k\right) +\lambda_k= 0.
$$
Denoting
$$
g_k(t) := \frac{g\left(N_k\phi(N_k) t\right)}{g\left(N_k\phi(N_k)\right)},
$$
and
$$
B_k(x,t,\p) :=  \frac{B\left(y_k + \frac{x}{\phi(N_k)}, N_k t, N_k\phi(N_k)\p\right)}{f(N_k)}, \quad \mu_k := \frac{\lambda_k}{f(N_k)},
$$
then using \eqref{phitilde} it is easy to see that $v_k$ is a weak solution to
\begin{equation}\label{Pk}
\begin{cases}
\Delta_{g_k} v_k + B_k(x,v_k,\nabla v_k) + \mu_k= 0& \text{in } \Omega_k,\\
v_k>0\\
v_k=0 & \text{on }\partial\Omega_k.
\end{cases}
\end{equation}

%From \eqref{Lieberman.f} we easily obtain
%\begin{equation}\label{eti2}
%f(N_k) \ge \frac{ q^-F(N_k)}{N_k}.
%\end{equation}
 Therefore, using \eqref{cota.B} \eqref{phitilde} and \eqref{comparison.phi.phitilde} we get
\begin{equation}\label{cota.Bk}
|B_k(x,t,\p)|\le K\left(1 + \frac{f(N_k t)}{f(N_k)} + \frac{h(k_+G^{-1}(N_k f(N_k)) \p)}{f(N_k)}\right). 
\end{equation}

Next, the proof is divided into two cases depending on the behavior of $f(M_k)/\lambda_k$.

\underline{Case 1}: $f(M_k)/\lambda_k$ is unbounded.

In this case, we can assume that
$$
\lim_{k\to\infty} \frac{f(M_k)}{\lambda_k} = \infty \Longrightarrow M_k\to\infty.
$$
In this case, we take
$$
N_k = M_k \qquad \text{and}\qquad y_k=x_k
$$
therefore $\|v_k\|_\infty = v_k(0)=1$.

Then, from our conditions on $f$ and \eqref{eti1} and using \eqref{cota.Bk} we readily obtain
$$
|B_k(x,v_k,\nabla v_k)|\le \tilde K(1+ G(|\nabla v_k|)),
$$
and observe that in this case, $\mu_k\to 0$. In particular, $\mu_k$ is bounded.

Since the transformation $x_k + \frac{x}{\phi(M_k)}$ flattens the boundary, we have that
$$
\|\partial\Omega_k\|_{1,\beta_0}\le \|\partial\Omega\|_{1,\beta_0}
$$
(see \cite{Zou} for the details).

Then by the regularity estimates of Theorem \ref{Adap.Lieberman}, we get that there exists a constant $C$ independent of $k$ such that
\begin{equation}\label{est.vk}
\|v_k\|_{C^{1,\beta}(\overline{\Omega_k})}\le C
\end{equation}
for some $\beta>0$ also independent of $k$.

Observe that, since $v_k(0)=1$, $v_k=0$ on $\partial\Omega_k$ and $|\nabla v_k|\le C$ for every $k$, there exists a $\rho>0$ such that
$$
\dist(0,\partial\Omega_k)\ge \rho,\quad\text{for every }k\in\N.
$$
%On the other hand,
%$$
%\dist(0,\partial\Omega_k) = \phi(M_k)\dist(x_k,\partial\Omega).
%$$
This case now breaks down into two subcases, either $\dist(0,\partial\Omega_k)$ is bounded or not.

\underline{Subcase 1.1}: Assume that  $\dist(0,\partial\Omega_k)$ is unbounded. Then, we can assume that
$$
\dist(0,\partial\Omega_k)\to\infty.
$$
In this case, we have that $\Omega_k\to\R^n$ in the sense that, given $R>0$, $B_R(0)\subset\Omega_k$ for $k$ large.

Next, from \eqref{est.vk}, using Arzela-Ascoli's Theorem together with a diagonal argument, we have that there exists $v\in C^{1,\beta/2}(\R^n)$ such that (passing to a subsequence, if necessary)
$$
v_k\to v \qquad \text{in } C^{1,\beta/2}_{loc}(\R^n).
$$
By our limits assumptions on $B$ \eqref{lim.B}, it follows that
$$
B_k(x,v_k,\nabla v_k)\to b(x_0)v^{q-1}\quad \text{uniformly on compact sets of }\R^n,
$$
where $x_0 = \lim_{k\to\infty} x_k$.

Moreover, by \eqref{reg.var} we also get
$$
g_k(|\nabla v_k|)\to |\nabla v|^{p-1} \quad \text{uniformly on compact sets of }\R^n.
$$
Hence, passing to the limit in the weak form of \eqref{Pk}, we obtain that $v$ is a weak solution to
$$
\Delta_p v + b(x_0)v^{q-1} = 0 \quad \text{in }\R^n,\qquad v\ge 0,
$$
but this implies, by Theorem \ref{Liouville}, that $v\equiv 0$ and this contradicts the fact that $v(0)=1$ and this completes the proof in this subcase.

\underline{Subcase 1.2}: Now we assume that $\dist(0,\partial\Omega_k)$ is bounded. Therefore, we may assume without loss of generality that
$$
\dist(0,\partial\Omega_k)\to d<\infty \quad\text{as } k\to\infty.
$$
Following \cite{Zou}, after possibly making a rotation and translation, we have that
$$
\Omega_k\to \R^n_d := \{x\in\R^n\colon x_n>-d\}.
$$

Reasoning exactly as in the previous subcase, we have that $v_k\to v$ in the $C^{1,\beta/2}(B_R(0)\cap\overline{\R^n_d})$ topology for every $R>0$ and that $v$ is a weak solution to
$$
\begin{cases}
\Delta_p v + b(x_0)v^{q-1}=0 & \text{in } \R^n_d\\
v = 0 & \text{on } \partial\R^n_d\\
v\ge 0
\end{cases}
$$
Applying now Theorem \ref{Liouville} for the half space, we obtain that $v\equiv 0$ and this contradicts again the fact that $v(0)=1$. This completes the proof in this subcase and hence the proof of case 1.

\underline{Case 2}: $f(M_k)/\lambda_k$ is bounded.

In this case it is immediate to see that  $\lambda_k\to\infty$, and we take the scale factor $N_k$ as
$$
N_k=f^{-1}(\lambda_k).
$$
Hence, $\mu_k=1$ for every $k\in\N$ in this case.

Since by hypothesis $f(M_k)/\lambda_k$ is bounded, we get that $M_k/N_k$ is also bounded and hence
$$
0\le v_k\le C.
$$

Next, we take $y_k=0$ and observe that in this case, $\Omega_k\to\R^n$ and hence, arguing exactly as in the previous case, $v_k\to v$ in $C^{1,\beta/2}_{loc}(\R^n)$ and $v$ is a weak solution to
$$
\begin{cases}
\Delta_p v + b(x_0)v^{q-1} + 1 = 0 & \text{in }\R^n\\
v\ge 0.
\end{cases}
$$
But Theorem \ref{Liouville} for this problem says that there is no nonnegative solution to this equation and hence the proof of Case 2 is complete.
\end{proof}

\section{Existence}\label{Existencia}

In our last section, we prove the existence of solutions to \eqref{P}. Observe that, in general, when $B$ depends on $\nabla u$, variational methods cannot be applied to address the existence problem \eqref{P}. Therefore, a priori bound estimates and a fixed point theorem become crucial.

\begin{teo} \label{te.existence}
Let $\Omega\subset \R^n$ be a bounded domain with $C^{1,\beta_0}$ boundary. Suppose that all conditions of Theorem \ref{teo.apriori} are satisfied. Furthermore, assume that $B$ satisfies the positivity and superlinearity conditions:

There exists $L>0$ such that
\begin{equation*}\tag{P}
		B(x,t,\p)+Lg(t)\geq0,\quad (x,t,\p)\in \Omega\times\R\times \R^n.
\end{equation*}
\begin{equation*}\tag{S}
		B(x,t,\p)+Lg(t)=o(g(t)+g(|\p|)), \quad\text{as $(t,\p)\to 0$ uniformly on $\Omega$}.
\end{equation*}
	 Then \eqref{P} has a positive weak solution $u\in W^{1, G}_0(\Omega)$.
\end{teo}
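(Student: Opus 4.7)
The plan is to recast problem \eqref{P} as a fixed-point equation in $C^1_0(\overline{\Omega})$ and combine the a priori estimate of Theorem \ref{teo.apriori} with the Leray--Schauder degree. For $\lambda\ge 0$ and $u\in C^1_0(\overline{\Omega})$, I would define $T_\lambda(u):=w$ to be the unique weak solution in $W^{1,G}_0(\Omega)$ of
\[
-\Delta_g w+Lg(w)=B(x,u,\nabla u)+Lg(u)+\lambda \qquad\text{in }\Omega,\quad w=0\text{ on }\partial\Omega.
\]
Solvability and uniqueness come from the strict monotonicity of $v\mapsto-\Delta_g v+Lg(v)$; condition (P) makes the right-hand side nonnegative, so the weak maximum principle gives $w\ge 0$; Theorem \ref{Adap.Lieberman} upgrades $w$ to $C^{1,\beta}(\overline{\Omega})$ with norm depending only on $\|u\|_{L^\infty}$; and the compactness of $C^{1,\beta}\hookrightarrow C^1$ makes $T_\lambda\colon C^1_0(\overline{\Omega})\to C^1_0(\overline{\Omega})$ continuous and compact. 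Fixed points of $T_\lambda$ are precisely the nonnegative weak solutions of \eqref{P.lambda}; the goal is to produce a nontrivial fixed point of $T_0$.

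The a priori step is short. Theorem \ref{teo.apriori} supplies $C_0>0$ such that any fixed point $u$ of $T_\lambda$ satisfies $\|u\|_\infty+\lambda\le C_0$, and Theorem \ref{Adap.Lieberman} upgrades this to a $\lambda$-uniform bound $\|u\|_{C^1(\overline{\Omega})}\le C_1$. Fixing $R>C_1+1$, the degree $\deg(I-T_\lambda,B_R,0)$ is well defined for every $\lambda\ge 0$; for $\lambda>C_0$ the map $T_\lambda$ has no fixed point at all, so its degree vanishes, and homotopy invariance in $\lambda$ yields
\[
\deg(I-T_0,B_R,0)=0.
\]

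For the local degree at the origin I would exploit (S). Given $\varepsilon>0$, condition (S) supplies $\delta>0$ such that $|B(x,t,\p)+Lg(t)|\le \varepsilon(g(t)+g(|\p|))$ whenever $0\le t+|\p|<\delta$. I would consider the homotopy $H(u,\tau)=\tau T_0(u)$, $\tau\in[0,1]$, and show that for $r>0$ small no $u$ with $\|u\|_{C^1}=r$ satisfies $u=\tau T_0(u)$; the case $\tau=0$ is trivial. For $\tau\in(0,1]$, $u\ge 0$ by (P); setting $w=T_0(u)=u/\tau$ and testing the equation for $w$ against $u=\tau w$, the lower bound $g(s)s\ge p^- G(s)$ from \eqref{Lieberman2.condition} controls the left-hand side from below by $p^-\tau\int_\Omega(G(|\nabla w|)+LG(w))\,dx$, while (S), a Young-type inequality to dispatch $g(|\nabla u|)u$, and the scaling $G(\tau s)\le \tau^{p^-}G(s)$ for $0<\tau\le 1$ bound the right-hand side above by a quantity of the form $\varepsilon C\tau\int_\Omega(G(w)+G(|\nabla w|))\,dx$. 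Dividing by $\tau$ and choosing $\varepsilon$ small enough forces $w\equiv 0$, contradicting $\|u\|_{C^1}=r$. Homotopy invariance then gives $\deg(I-T_0,B_r,0)=\deg(I,B_r,0)=1$, and the excision property yields $\deg(I-T_0,B_R\setminus\overline{B_r},0)=-1\ne 0$. This produces a nontrivial nonnegative fixed point of $T_0$ in $B_R\setminus\overline{B_r}$, i.e.\ a nontrivial weak solution of \eqref{P}; the strong maximum principle for quasilinear operators of Lieberman type then upgrades it to $u>0$ in $\Omega$.

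The main obstacle I expect is the scaling calculation inside the homotopy argument. Because $\Delta_g$ is not positively homogeneous, the substitution $u=\tau w$ generates genuinely non-power factors $G(\tau s)$ and $g(\tau s)s$ whose interaction with the exponents $p^-,p^+$ must be controlled uniformly in $\tau\in(0,1]$ through \eqref{Lieberman.condition}--\eqref{Lieberman2.condition}; one cannot rely on the power-function arithmetic used in the $p$-Laplacian treatment of \cite{DRuiz,Zou}, and one must verify that the universal constants generated by the Orlicz--Young duality are eventually absorbed by the smallness of $\varepsilon$ from (S).
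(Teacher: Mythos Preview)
Your proposal is correct and follows essentially the same route as the paper: the paper merely packages your Leray--Schauder degree computation through a Krasnoselskii-type cone fixed-point lemma (Lemma \ref{fixed point}), with the identical small-ball estimate via (S) and the identical large-ball step via Theorem \ref{teo.apriori} together with nonexistence for large $\lambda$. One small caveat: since $g$ is defined only on $[0,\infty)$, you should either work in the cone of nonnegative functions in $C^1(\overline{\Omega})$, as the paper does, or extend $g$ oddly before defining $T_\lambda$ on all of $C^1_0(\overline{\Omega})$.
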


The main idea to prove Theorem \ref{te.existence} is to use the following theorem from \cite{Krasnoselskii}, which deals with the existence of fixed points on compact operators defined in a cone.

\begin{lema}[Fixed point theorem]\label{fixed point}
	Let $\mathcal{C}$ be a cone in a Banach space $X$  and $\Lambda\colon \mathcal{C}\to \mathcal{C}$ be a compact operator such that $\Lambda(0)=0$. Assume that there exists $r>0$, satisfying:
	\begin{enumerate}
		\item $u\not=t\Lambda(u)$ for all $\|u\|=r$, $t\in [0,1]$.
		
Assume also that there exist a compact homotopy $H\colon [0,1]\times \mathcal{C}\to\mathcal{C}$ and $R>0$ such that:
		\item $\Lambda(u)=H(0,u)$ for all $u\in \mathcal{C}$.
		\item $H(t,u)\not=u$ for any $u$ such that $\|u\|=R$, $t\in [0,1]$.
		\item $H(1,u)\not=u$ for any $u$ such that $\|u\|\leq R$. 
	\end{enumerate}
Then $\Lambda$ has a fixed point in $D=\{u\in \mathcal{C}\colon  r\leq \|u\|\leq R\}$.
\end{lema}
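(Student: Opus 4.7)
The plan is to invoke the fixed-point index for compact maps on a cone (the Krasnoselskii cone degree), whose basic properties—normalization, homotopy invariance, additivity, and the solution property—are standard. Write $i(T, U, \mathcal{C})$ for this index when $T$ is a compact map on a relatively open bounded $U \subset \mathcal{C}$ with no fixed points on $\partial U$. Set
$$
U_r = \{u \in \mathcal{C} : \|u\| < r\}, \qquad U_R = \{u \in \mathcal{C} : \|u\| < R\}.
$$
The strategy is to establish
$$
i(\Lambda, U_r, \mathcal{C}) = 1 \quad \text{and} \quad i(\Lambda, U_R, \mathcal{C}) = 0,
$$
and then conclude via additivity applied on the annular region $V := U_R \setminus \overline{U_r}$.

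To compute $i(\Lambda, U_r, \mathcal{C})$, I would consider the compact homotopy $(t, u) \mapsto t\Lambda(u)$ from $[0, 1] \times \overline{U_r}$ into $\mathcal{C}$. Hypothesis (1) states precisely that this homotopy has no fixed point on $\partial U_r$. Homotopy invariance together with the normalization of the cone index at the zero map then yield
$$
i(\Lambda, U_r, \mathcal{C}) = i(0, U_r, \mathcal{C}) = 1.
$$

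For $i(\Lambda, U_R, \mathcal{C})$, the given compact homotopy $H$ is used directly. Hypothesis (3) asserts that $H(t, \cdot)$ has no fixed points on $\partial U_R$ for any $t \in [0,1]$, while hypothesis (4) says $H(1, \cdot)$ has no fixed points anywhere in $\overline{U_R}$. Combining these with $\Lambda = H(0, \cdot)$ from hypothesis (2), homotopy invariance and the solution property give
$$
i(\Lambda, U_R, \mathcal{C}) = i(H(1, \cdot), U_R, \mathcal{C}) = 0.
$$

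Finally, $V$ is open in $\mathcal{C}$, and $\Lambda$ has no fixed points on $\partial U_r$ (hypothesis (1) with $t = 1$) nor on $\partial U_R$ (hypothesis (3) with $t = 0$, via $\Lambda = H(0, \cdot)$), so the additivity property applies:
$$
i(\Lambda, U_R, \mathcal{C}) = i(\Lambda, U_r, \mathcal{C}) + i(\Lambda, V, \mathcal{C}),
$$
forcing $i(\Lambda, V, \mathcal{C}) = -1 \neq 0$. The solution property then produces a fixed point of $\Lambda$ in $V \subset D$, as required. The main obstacle is essentially bookkeeping: verifying that the four hypotheses exactly rule out boundary fixed points as demanded by each axiom of the index, and recalling that the cone version of the degree (rather than Leray--Schauder on the ambient Banach space) is the appropriate framework since $\Lambda$ maps $\mathcal{C}$ into itself. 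Once these identifications are in place, the computation is automatic.
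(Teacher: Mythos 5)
The paper offers no proof of this lemma: it is quoted as a known result from Krasnoselskii's book, so there is no in-paper argument to compare against. Your fixed-point-index proof is the standard one for this compression/expansion-type statement and is correct. The three computations are all sound: hypothesis (1) makes $t\Lambda$ an admissible homotopy on $\partial U_r$ joining $\Lambda$ to the constant map $0\in U_r$, giving index $1$ by normalization; hypotheses (2)--(4) combine with homotopy invariance and the solution property to force $i(\Lambda,U_R,\mathcal{C})=0$; and the boundary conditions needed for additivity on $U_R=U_r\cup V\cup(\partial U_r\cup\partial U_R)$ are exactly supplied by (1) at $t=1$ and by (2)+(3) at $t=0$, so $i(\Lambda,V,\mathcal{C})=-1$ and a fixed point exists in $V\subset D$. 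Two small points worth making explicit: the index on the cone is well defined because $\mathcal{C}$, being closed and convex, is a retract of $X$ (Dugundji), which is why the cone version rather than Leray--Schauder degree is available here; and the argument tacitly requires $r<R$ so that $V$ is a nonempty open annulus --- this is implicit in the statement (otherwise $D$ is empty or degenerate) but should be recorded when the hypotheses are verified in the application.
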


\begin{remark}
In the course of the proof of Theorem \ref{te.existence}, we will need to use the inequality
\begin{equation}\label{young}
g(t)s\le C (G(t)+G(s)).
\end{equation}

In fact, we have that
$$
g(t)s\le \tilde G(g(t)) + G(s),
$$
and \eqref{young} follows from this and \cite[Lemma 2.9]{MR3952156}
\end{remark}

Finally, we will provide a proof for Theorem \ref{te.existence}.
\begin{proof}
	
Let $X:= C^1(\overline{\Omega})$ and let $\mathcal{C}$ be the cone of nonnegative functions, $\mathcal{C}=\{u \in X\colon u \geq 0\}$.
	
Let $T\colon C^1(\overline{\Omega})\to C(\overline{\Omega}) $ be the operator defined by  $T(u)=B(x,u,\nabla u)+Lg(u)$.

Observe that, for each $\psi \in C(\overline{\Omega})$ the problem
$$ 
\begin{cases}
-\Delta_g u+Lg(u)=\psi(x)&\qquad \Omega\\
u=0&\qquad\partial\Omega,
\end{cases}
$$
has unique weak solution $u_\psi\in C^{1,\beta}(\overline{\Omega})$ for some positive $\beta$ \cite[Theorem 1.7]{Lieberman}.

 Therefore, we define $S\colon C(\overline{\Omega})\to C^{1,\beta}(\overline{\Omega})\cap C_0(\Omega)$ as the solution operator, $S(\psi)=u_\psi$, note that $S$ is a continuous and positive operator.

Now, if we denote 
$$
\Lambda=i\circ S\circ T:C^1(\overline{\Omega})\to C^{1,\beta}(\overline{\Omega})\hookrightarrow C^1(\overline{\Omega}),
$$
where $i\colon C^{1,\beta}(\overline{\Omega})\hookrightarrow C^1(\overline{\Omega})$ is the inclusion operator, then $\Lambda$ is a compact operator such that $\Lambda(0)=0$. Observe that the positivity assumption (P) implies that $\Lambda\colon \mathcal{C}\to\mathcal{C}$.
 
 To apply Lemma \ref{fixed point} we must verify conditions (1)--(4) for $\Lambda$.
 
  Assume that $u=t\Lambda(u)$ for some $u\in \mathcal{C}$ such that $\|u\|=r$ and a certain $t\in [0,1]$. Then
 
 $$
 -\Delta_g\left(\frac{u}{t}\right)+Lg\left(\frac{u}{t}\right)=B(z,u,\nabla u)+Lg(u).
 $$
By taking $u$ as a test function, and the (S) hypothesis on $B$ we obtain
\begin{align*}
\int_\Omega g\left(\left|\nabla\left(\frac{u}{t}\right)\right|\right)|\nabla u| + Lg\left(\frac{u}{t}\right)u\,dx &=\int_\Omega u[B(z,u,\nabla u)+Lg(u)]\,dx\\
&= \int_\Omega o(g(u) + g(|\nabla u|)) u \, dx\\
&=\int_\Omega o(G(u)+G(|\nabla u|))\,dx
\end{align*}
as $\|u\|\to0$ where we used \eqref{young} in the last step. But, this implies that
$$
p^- \int_\Omega G(|\nabla u|) + L G(|u|)\, dx \le t^{p^+-1}\int_\Omega o(G(|u|) + G(|\nabla u|))\, dx.
$$
Therefore, we can chose $r>0$ small enough such that the equation $u=t\Lambda(u)$ has no positive solution in $B_r(0)\setminus\{0\}$ for all $t\in [0,1]$.

 By Theorem \ref{teo.apriori}, there exists a positive constant $\lambda_0$ such that \eqref{P.lambda} has no solution. Therefore, we define $H:[0,1]\times \mathcal{C}\to \mathcal{C}$ as
$$
 H(t,u)=i\circ S(T(u)+t\lambda_0).
$$

Clearly $H(0,u)=\Lambda(u)$ for any $u\in\mathcal{C}$, so (2) holds.
Observe that the equation $u=H(t,u)$ is equivalent to
$$
\begin{cases}
	-\Delta_gu+Lg(u)=B(z,u,\nabla u)+Lg(u)+t\lambda_0 &\quad\mbox{in}\quad \Omega\\
	u=0&\quad\mbox{on}\quad \partial\Omega.
\end{cases}
$$
Which is equivalent to
$$
\begin{cases}\label{segunda.equiv}
	\Delta_gu+B(z,u,\nabla u)+t\lambda_0=0 &\quad \mbox{in}\quad \Omega\\
	u=0&\quad \mbox{on}\quad \partial\Omega.
\end{cases}
$$
Hence, by Theorem \ref{teo.apriori}, we have that $
\|u\|_\infty + t\lambda_0 \le C,
$ then choosing $R=C+1$ we obtain that (3) holds.

Finally, $H(1,u)=u$ has not solution in view of the choice of $\lambda_0=C+1$, therefore (4) holds.
In conclusion, $\Lambda$ has a fixed point $u \in \mathcal{C}$ which it is a non-negative solution of \eqref{P} as we wanted to prove. 
\end{proof}

\section*{Acknowledgments}
This work was partially supported by UBACYT Prog. 2018 20020170100445BA, CONICET PIP 11220210100238CO and
ANPCyT PICT 2019-03837 and PICT 2019-00985. J. Fern\'andez Bonder and A. Silva are members of CONICET and I. Ceresa-Dussel is a doctoral fellow of CONICET.

\bibliography{References.bib}

\begin{thebibliography}{10}

\bibitem{Adams}
Robert~A. Adams.
\newblock {\em Sobolev spaces}.
\newblock Pure and Applied Mathematics, Vol. 65. Academic Press [Harcourt Brace
  Jovanovich, Publishers], New York-London, 1975.

\bibitem{Barletta}
Giuseppina Barletta.
\newblock Existence and regularity results for nonlinear elliptic equations in
  orlicz spaces, 2022.

\bibitem{Barletta-Tornatore}
Giuseppina Barletta and Elisabetta Tornatore.
\newblock Elliptic problems with convection terms in {O}rlicz spaces.
\newblock {\em J. Math. Anal. Appl.}, 495(2):Paper No. 124779, 28, 2021.

\bibitem{Barletta-Tornatore2}
Giuseppina Barletta and Elisabetta Tornatore.
\newblock Regular solutions for nonlinear elliptic equations, with convective
  terms, in orlicz spaces, 2022.

\bibitem{P2}
Jean-Philippe Bouchaud and Antoine Georges.
\newblock Anomalous diffusion in disordered media: Statistical mechanisms,
  models and physical applications.
\newblock {\em Physics Reports}, 195(4-5):127--293, 1990.

\bibitem{IP1}
Antoni Buades, Bartomeu Coll, and Jean-Michel Morel.
\newblock A non-local algorithm for image denoising.
\newblock {\em 2005 IEEE Computer Society Conference on Computer Vision and
  Pattern Recognition (CVPR'05)}, pages 60--65, 2005.

\bibitem{E2}
R~M Canals, A~Garc{\'\i}a-Ferrer, and J~C L{\'o}pez-Marcos.
\newblock $p$-laplacian models for population dynamics.
\newblock {\em Journal of Mathematical Biology}, 70(6):1431--1448, 2015.

\bibitem{Clement-Mansevich}
Philippe Cl\'{e}ment, Ra\'{u}l Man\'{a}sevich, and Enzo Mitidieri.
\newblock Positive solutions for a quasilinear system via blow up.
\newblock {\em Comm. Partial Differential Equations}, 18(12):2071--2106, 1993.

\bibitem{FD1}
Xinxin Feng and Michael Neilan.
\newblock Numerical methods for solving the generalized stokes problem.
\newblock {\em Journal of Computational Physics}, 218(1):166--184, 2006.

\bibitem{MR3952156}
Juli\'{a}n Fern\'{a}ndez~Bonder and Ariel~M. Salort.
\newblock Fractional order {O}rlicz-{S}obolev spaces.
\newblock {\em J. Funct. Anal.}, 277(2):333--367, 2019.

\bibitem{E1}
Emanuel~A Fronhofer, Julia Klecka, Carlos~J Meli{\'a}n, and Florian Altermatt.
\newblock Condition-dependent movement and dispersal in experimental
  metacommunities.
\newblock {\em Ecology Letters}, 21(9):1389--1398, 2018.

\bibitem{Gidas-Spruck}
B.~Gidas and J.~Spruck.
\newblock A priori bounds for positive solutions of nonlinear elliptic
  equations.
\newblock {\em Communications in Partial Differential Equations},
  6(8):883--901, 1981.

\bibitem{Hu}
Zhe Hu, Li~Wang, and Peihao Zhao.
\newblock A priori estimates and existence for quasilinear elliptic equations
  with nonlinear {N}eumann boundary conditions.
\newblock {\em Electron. J. Differential Equations}, pages Paper No. 187, 8,
  2016.

\bibitem{Krasnoselskii}
M.~A. Krasnoselskii.
\newblock Fixed points of cone-compressing or cone-extending operators.
\newblock {\em Soviet Math. Dokl.}, 1:1285--1288, 1960.

\bibitem{Lieberman}
Gary~M. Lieberman.
\newblock The natural generalizationj of the natural conditions of
  ladyzhenskaya and uralľtseva for elliptic equations.
\newblock {\em Communications in Partial Differential Equations},
  16(2-3):311--361, 1991.

\bibitem{P1}
L.{\hspace{0.167em}}R. Liu, J.{\hspace{0.167em}}D. Hood, Y.~Yu,
  J.{\hspace{0.167em}}T. Zhang, K.~Wang, Y.-W. Lin, T.~Rosenband, and K.-K. Ni.
\newblock Molecular assembly of ground-state cooled single atoms.
\newblock {\em Physical Review X}, 9(2), may 2019.

\bibitem{DRuiz}
D.~Ruiz.
\newblock A priori estimates and existence of positive solutions for strongly
  nonlinear problems.
\newblock {\em J. Differential Equations}, 199(1):96--114, 2004.

\bibitem{Zou}
Heng~Hui Zou.
\newblock A priori estimates and existence for quasi-linear elliptic equations.
\newblock {\em Calculus of Variations and Partial Differential Equations}, 33,
  2008.

\end{thebibliography}

\bibliographystyle{plain}

\end{document}